\newtheorem{theorem}{Theorem}
\newtheorem{definition}[theorem]{Definition}
\newtheorem{lemma}[theorem]{Lemma}
\newtheorem{proposition}[theorem]{Proposition}
\newtheorem{example}{Example}
\newtheorem{remark}{Remark}
\title{\LARGE \bf Linear-Quadratic Mean Field Control: The Hamiltonian Matrix\\
 and Invariant Subspace Method    }
\author{Xiang Chen\qquad Minyi Huang
\thanks{This work was  supported by NSERC.}
\thanks{The authors are with the School of
Mathematics and Statistics, Carleton University, Ottawa, K1S 5B6 ON,
Canada (XiangChen@cmail.carleton.ca, mhuang@math.carleton.ca).
Proc. IEEE CDC'18 with corrected Figs. 1, 2.  }%
}
\begin{document}

\maketitle
\thispagestyle{empty}
\pagestyle{empty}

\begin{abstract}
This paper studies the existence and uniqueness of a solution to  linear quadratic  (LQ) mean field social optimization problems with uniform agents. We exploit a  Hamiltonian matrix structure of the associated ordinary differential equation (ODE) system
 and apply a subspace decomposition method to find the solution.
 This approach is effective for both the existence analysis and numerical computations.
 We further extend the decomposition method to  LQ mean field games.
\end{abstract}

\section{Introduction}
\label{chp:intro}

Mean field game (MFG) theory studies stochastic dynamic decision  problems involving a large number of noncooperative and individually insignificant agents, and provides a powerful methodology to reduce the complexity in designing  strategies \cite{HCM03}. For an overview of the theory and applications, the readers are referred to \cite{BFY13,CHM17,GLL11,HCM07,HMC06,LL07} and references therein.

There has existed a parallel development on mean field social optimization
where a large number of agents cooperatively minimize a social cost as the sum of individual costs.
Different from mean field games, the individual strategy selection of
an agent is not selfish and should take into account both self improvement and the aggregate impact on other agents' costs. Mean field social optimization problems have been studied in multi-agent
collective motion \cite{ACFK17,PRT15}, social consensus control \cite{NCMH13}, economic theory \cite{NM17}.
Other related literature includes  Markov decision processes using aggregate statistics and their mean field limit \cite{GGB12}, LQ mean field teams \cite{AM15}, LQ social optimization with
 a major player \cite{HN16}, mean field teams with
  Markov jumps \cite{WZ17},  social optimization with nonlinear diffusion dynamics \cite{SHM16},
 and  cooperative stochastic differential games \cite{YP06}.




In this paper, we consider social optimization in  an LQ model of uniform agents. The dynamics of agent $i$ are given by the stochastic differential equation (SDE):
\begin{align}
dx_i=(Ax_i+Bu_i)dt+DdW_i,    \label{doa}
        \quad t \geq 0, \quad 1\le i\le N.
\end{align}
 The state $x_i$ and the control $u_i$ are $n$ and $n_1$ dimensional vectors respectively. The initial states $\{x_i(0),\  1\leq i\leq N\}$ are independent. The noise processes  $\{W_i,\  1\leq i\leq N\}$ are $n_2$ dimensional independent standard Brownian motions, which are also independent of  $\{x_i(0),\  1\leq i\leq N\}$. The constant matrices $A$, $B$ and $D$  have compatible dimensions. Given a symmetric  matrix $M\ge 0$, the quadratic form $z^TMz$ is sometimes denoted by $|z|_M^2$. Denote $u:=(u_1,\cdots,u_N)$.

The individual cost for agent $i$ is given by
\begin{align}
J_i(u(\cdot))=E\int_0^\infty e^{-\rho t}[ |x_i-\Phi(x^{(N)})|_Q^2+u_i^TRu_i]dt, \label{ic}
\end{align}
where $\rho>0$,  $\Phi(x^{(N)})=\Gamma x^{(N)}+\eta$ and $x^{(N)}:=(1/N)
\sum_{i=1}^N x_i$ is  the mean field coupling term. The constant matrices or vectors $\Gamma$, $Q$, $R$ and $\eta$ have compatible dimensions, and $Q$, $R$ are symmetric.
The social cost is defined as
\begin{align}
J_{\rm soc}^{(N)}(u(\cdot))=\sum_{i=1}^N J_i(u(\cdot)). \nonumber 
\end{align}
The minimization of the social cost is an optimal control problem. However, the exact solution requires centralized information for each agent. So
a solution of practical interest  is to find a set of  decentralized strategies
which has negligible optimality loss in minimizing
$J_{\rm soc}^{(N)}(u(\cdot))$ for large $N$ and the solution method has been developed in \cite{HCM12} under the following assumption:
(A1) $Q\ge 0$, $R>0$, $(A, B)$ is stabilizable and $(A,Q^\frac{1}{2})$ is detectable.

Under (A1), there  exists a unique solution   $\Pi\ge 0$ to the  algebraic Riccati equation (ARE):
\begin{align}
\rho \Pi=\Pi A+A^T\Pi-\Pi BR^{-1}B^T\Pi+Q. \label{ARE}
\end{align}

Denote $Q_\Gamma=\Gamma^TQ+Q\Gamma-\Gamma^TQ\Gamma$ and $\eta_\Gamma=(I-\Gamma^T)Q\eta$.
We introduce the Social Certainty Equivalence (SCE) equation system:
\begin{align}
&\frac{d\bar x}{dt}=(A-BR^{-1}B^T\Pi)\bar x-BR^{-1}B^Ts, \label{SCE1_1}\\
&\frac{ds}{dt}=Q_\Gamma\bar x+(\rho I-A^T+\Pi BR^{-1}B^T)s+\eta_\Gamma, \label{SCE2_1}
\end{align}
where $\bar x(0)=x_0$ is given and $s(0)=s_0$ is to be determined.  We look for $(\bar x, s)\in C_{\rho/2}([0,\infty),\mathbb R^{2n})$ (see Definition \ref{def:cr}).
If a finite time horizon $[0, T]$ is considered for \eqref{ic}, $s$ will have a terminal condition $s(T)$ and $\Pi$ will depend on time.
This results in a standard two point boundary value (TPBV) problem for
linear ordinary differential equations (ODEs).
Given the infinite horizon,  $s$ satisfies a growth condition instead of a terminal condition.

The key result in \cite{HCM12} under (A1) is that if \eqref{SCE1_1}-\eqref{SCE2_1} has a unique solution, the set of decentralized strategies
\begin{align}
\hat u_i=-R^{-1}B^T(\Pi x_i+s), \quad 1\le i\le N,  \label{oc}
\end{align}
 has asymptotic social optimality. In other words, centralized strategies can further reduce the cost $J_{\rm soc}^{(N)}(u(\cdot))$
by at most $o(N)$.
  In fact,
  \cite{HCM12} constructed a more general version of \eqref{SCE1_1}-\eqref{SCE2_1}    where
the parameter $A$ is randomized over the population and accordingly $\bar x$ in the equation of $s$ is replaced by a mean field averaging over
the nonuniform population.


\subsection{Our Approach and Contributions}

After some transformation, the coefficient matrix of  \eqref{SCE1_1}-\eqref{SCE2_1} reduces to a Hamiltonian matrix which can be associated with an LQ optimal control problem with state weight matrix $-Q_\Gamma$. The connection to such an LQ control problem is remarkable since its state weight may not be positive semi-definite. When $Q_\Gamma\le 0$, existence and uniqueness of the solution has been proved \cite[Theorem 4.3]{HCM12} by a standard Riccati equation approach.
On the other hand, due to the intrinsic optimal control nature of the social optimization problem, one expects to obtain solvability of the SCE equation system under much more general conditions, which is the focus of this work.
Furthermore, our approach allows $Q$ to be indefinite.
LQ optimal control problems with indefinite state and/or control weight
is  a subject of considerable interest \cite{D15,RZ01,W71,YZ99}.

 We develop a new approach to
 analyze and compute  the solution of  (\ref{SCE1_1})-(\ref{SCE2_1}) for a general $Q_\Gamma$ by exploiting a Hamiltonian matrix structure and the well known
invariant subspace method \cite{BIM11}.
 Specifically, aided by the solution of a  continuous-time algebraic Riccati equation (ARE) with possibly indefinite state weight, we  decompose the Hamiltonian matrix into a block-wise triangular form where the stable eigenvalues are separated from the unstable ones.
To numerically solve the Riccati equation, we apply the
Schur method \cite{L79}.
  The approach of decomposing the  stable invariant subspace is  further extended  to solve  LQ mean field games; see \cite{B12,BS16,LZ08,MB17} for related literature on LQ mean field games.
 The main results of this paper have been reported  in \cite{C17,CH18}
 in an early form.

The organization of the paper is as follows. Section \ref{chp:socp} proves existence and uniqueness of a solution to the SCE equation system
and develops a computational method.  Section \ref{chp:mggp} extends the analysis to   LQ mean field games. Numerical examples are presented in Section \ref{sec:num}. Section \ref{chp:con} concludes the paper.

\section{Solution  of the Social Optimization Problem}
\label{chp:socp}

\subsection{Preliminaries on Algebraic Riccati Equations}

Let $S^n\subset \mathbb{R}^{n\times n}$ denote the set of symmetric matrices, and
$S^n_+\subset S^n$ the set of positive semi-definite matrices. Our later analysis   depends on an invariant subspace decomposition method which involves a class of continuous-time algebraic Riccati equations (ARE) of the form
\begin{align}
XA_o+A_o^TX-XMX  +Q_o=0,   \label{CARE}
\end{align}
where $A_o$, $Q_o$, $M$ are given matrices in $\mathbb R^{n\times n}$ with $Q_o\in S^n$ and $M\in S^n_+$. Note that $Q_o$ is not required to be positive semi-definite.  Denote the Hamiltonian matrix
\begin{align}
H_o=\begin{bmatrix}
A_o & -M\\
-Q_o & -A_o^T\\
\end{bmatrix}. \label{Ho}
\end{align}
Note that the eigenvalues of a  Hamiltonian matrix are distributed symmetrically about both the real axis  and the imaginary axis \cite{L84}.
If $H_o$ has no eigenvalue on the imaginary axis, the left and right open half planes each contain $n$ eigenvalues.


For a solution $X_+\in S^n$ of (\ref{CARE}), $X_+$ is the maximal real symmetric solution \cite{LR95} if for any  solution $X\in S^n$, $X_+-X\ge 0$.
A (real or complex) matrix is called stable if all its eigenvalues are in the open left half-plane; such an eigenvalue is also said to be stable.


\begin{proposition} \label{cor:ric}
If $(A_o,M)$ is stabilizable and $H_o$ has no eigenvalues on the imaginary axis (i.e., no eigenvalues with zero real parts), then there exists a unique maximal real symmetric solution $X_+$ and $A_o-MX_+$ is stable.
\end{proposition}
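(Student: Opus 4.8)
The plan is to obtain $X_+$ directly from the invariant-subspace structure of the Hamiltonian matrix $H_o$ and then verify maximality separately. Since $H_o$ has no eigenvalues on the imaginary axis, its spectrum splits into $n$ stable eigenvalues and $n$ unstable ones. Let $\mathcal V_-\subset\mathbb R^{2n}$ be the $n$-dimensional $H_o$-invariant subspace associated with the stable eigenvalues (the stable invariant subspace), and write a basis for it as the columns of $\begin{bmatrix}U\\V\end{bmatrix}$ with $U,V\in\mathbb R^{n\times n}$. The first step is to show $U$ is invertible. I would argue by contradiction: if $Ux=0$ for some $x\neq0$, then using $H_o\begin{bmatrix}U\\V\end{bmatrix}=\begin{bmatrix}U\\V\end{bmatrix}\Lambda$ with $\Lambda$ stable, together with the block structure \eqref{Ho} and the stabilizability of $(A_o,M)$ (equivalently detectability of $(-A_o^T, M)$ read off the lower block), one derives that $Vx$ generates a nontrivial unstable invariant subspace of $-A_o^T$ inside $\mathcal V_-$, contradicting stability; the Hamiltonian symmetry $J H_o = -H_o^T J$ with $J=\begin{bmatrix}0&I\\-I&0\end{bmatrix}$ is the algebraic identity that drives this. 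Once $U$ is invertible, set $X_+ := VU^{-1}$.

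Next I would verify that $X_+$ solves \eqref{CARE} and that $A_o-MX_+$ is stable. From $H_o\begin{bmatrix}U\\V\end{bmatrix}=\begin{bmatrix}U\\V\end{bmatrix}\Lambda$ the top block gives $A_oU - MV = U\Lambda$, i.e. $A_o - MX_+ = U\Lambda U^{-1}$, which is stable since $\Lambda$ is; the bottom block gives $-Q_oU - A_o^TV = V\Lambda$, and substituting $\Lambda = U^{-1}(A_o-MX_+)U$ and multiplying by $U^{-1}$ on the right yields exactly $X_+A_o + A_o^TX_+ - X_+MX_+ + Q_o = 0$. Symmetry of $X_+$ follows from the standard Hamiltonian argument: $U^TV$ is symmetric because $\mathcal V_-$ is a Lagrangian subspace — one shows $\begin{bmatrix}U\\V\end{bmatrix}^T J \begin{bmatrix}U\\V\end{bmatrix}$ satisfies a Sylvester equation $\Lambda^T W + W\Lambda = 0$ with $\Lambda$ and $-\Lambda$ sharing no eigenvalue, hence $W=0$, i.e. $U^TV=V^TU$, so $X_+=VU^{-1}$ is symmetric.

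For maximality, I would take any symmetric solution $X$ of \eqref{CARE} and show $X_+ - X\ge 0$. Setting $\Delta = X_+ - X$ and subtracting the two Riccati identities, one gets $\Delta(A_o-MX) + (A_o-MX_+)^T\Delta + \Delta M\Delta = 0$, equivalently the Lyapunov-type equation $(A_o-MX_+)^T\Delta + \Delta(A_o-MX_+) = -\Delta M\Delta \le 0$. Since $A_o-MX_+$ is stable, this Lyapunov equation has the unique solution $\Delta = \int_0^\infty e^{(A_o-MX_+)^Tt}\,\Delta M\Delta\, e^{(A_o-MX_+)t}\,dt \ge 0$, giving $X_+\ge X$. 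Uniqueness of the maximal solution is then immediate: if $X_+'$ is another maximal solution, $X_+ \ge X_+'$ and $X_+' \ge X_+$ force $X_+ = X_+'$.

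The main obstacle is the first step — establishing invertibility of $U$. The solvability of \eqref{CARE} and all the subsequent algebra are essentially formal consequences of having an $n$-dimensional stable invariant subspace with invertible "top block," but showing that block is nonsingular is precisely where stabilizability of $(A_o,M)$ must be used, and it requires care because $Q_o$ is allowed to be indefinite, so the usual positive-semidefinite-based arguments from the classical LQ Riccati theory are unavailable and one must rely purely on the Hamiltonian/Lagrangian structure and a spectral contradiction argument.
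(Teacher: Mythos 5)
Your proof is correct in substance but takes a genuinely different route from the paper. The paper's proof is a two-line citation: it invokes Theorem 9.3.1 of Lancaster--Rodman to get a unique almost stabilizing symmetric solution (using stabilizability of $(A_o,M)$), and then Theorem 7.9.4 to upgrade this to a unique maximal real symmetric solution $X_+$ with $A_o-MX_+$ stable. You instead build $X_+$ directly from the stable graph subspace of $H_o$ and prove maximality by hand via a Lyapunov equation; this is essentially the proof of Theorems 13.5--13.6 in Zhou--Doyle--Glover (which the paper itself cites later, in Proposition \ref{cx_ee}, for exactly this construction). Your route is more self-contained and, since $H_o$ has no imaginary-axis eigenvalues, it delivers a strictly stabilizing solution in one step rather than passing through the ``almost stabilizing'' intermediate; the cost is that you must supply the invertibility of $U$ and the maximality argument yourself, which the cited theorems package for free. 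Two small points to tighten. First, in the maximality step your displayed identity $\Delta(A_o-MX)+(A_o-MX_+)^T\Delta+\Delta M\Delta=0$ has an extraneous term: subtracting the two Riccati identities gives $\Delta(A_o-MX)+(A_o-MX_+)^T\Delta=0$, and only after replacing $X$ by $X_+$ in the first factor does the $-\Delta M\Delta$ appear; your ``equivalent'' form $(A_o-MX_+)^T\Delta+\Delta(A_o-MX_+)=-\Delta M\Delta$ is the correct one and is what the integral representation needs. Second, in the invertibility step the contradiction is not with ``stability'' of $\mathcal V_-$ but with stabilizability of $(A_o,M)$ via the PBH test: for $x\in\ker U$ the Lagrangian identity $U^*V=V^*U$ together with $M\ge 0$ gives $MVx=0$ and hence $\Lambda$-invariance of $\ker U$; taking an eigenvector $\Lambda x=\lambda x$ in $\ker U$ with $\mathrm{Re}\,\lambda<0$, the lower block row yields $A_o^T(Vx)=-\lambda Vx$ with $Vx\ne 0$ and $(Vx)^*M=0$, i.e.\ a left eigenvector of $A_o$ at the antistable eigenvalue $-\bar\lambda$ annihilating $M$, which is precisely what stabilizability forbids. (The vector $[0^T,(Vx)^T]^T$ actually lies in $\mathcal V_-$ as a stable eigenvector of $H_o$, so no contradiction arises from the spectrum of $\mathcal V_-$ itself.) With these repairs the argument is complete.
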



\begin{proof}
By Theorem 9.3.1 in \cite[p. 239]{LR95}, there exists a unique almost stabilizing solution $X$ in $S^n$ (i.e., all eigenvalues of $A_o-MX$ are in the closed left half plane). Further applying Theorem 7.9.4 in \cite[p. 195-196]{LR95}, we obtain  a unique maximal real symmetric solution $X_+$ and $A_o-MX_+$ is stable.
\end{proof}

\subsection{The Transformation}
\label{sec:socd}

\begin{definition} \label{def:cr}
For integer  $k\ge 1$ and real number $r>0$, $C_{r}([0,\infty),\mathbb R^{k})$ consists of  all functions $f\in C([0,\infty),\mathbb R^{k})$ such that $\sup_{t\geq 0}|f(t)|e^{-r't}<\infty$, for some $0<r'< r$. Here $r'$ may depend on $f$.
\end{definition}

Denote
\begin{align}
\label{HA}
H_A=
\begin{bmatrix}
    A-\frac{\rho}{2}I & -BR^{-1}B^T \\
   -Q &  -A^T+\frac{\rho}{2}I  \\
\end{bmatrix}.\
\end{align}
We introduce the following standing assumption for the rest of this paper.

(SA)  $(A,B)$ is stabilizable, $R>0$, and $H_A$ has no eigenvalues with zero real parts.

Under (SA), we may solve a unique maximal solution $\Pi\in S^n$ from \eqref{ARE} such that $A-BR^{-1}B^T \Pi-\frac{\rho}{2} I$ is stable.
This ensures the construction of  \eqref{SCE1_1}-\eqref{SCE2_1}.
Note that we do not require $Q\ge 0$.

 Define
\begin{align*}
\tilde x=e^{-\rho t/2}\bar x ,\quad \tilde {s}=e^{-\rho t/2}s.
\end{align*}
 We obtain
\begin{align}
\begin{bmatrix}
   \frac{d\tilde{x}}{dt} \\
   \frac{d\tilde{s}}{dt} \\
\end{bmatrix}
=H
\begin{bmatrix}
   \tilde{x} \\
   \tilde{s} \\
\end{bmatrix}
+
\begin{bmatrix}
   0 \\
   \tilde{\eta}_\Gamma \\
\end{bmatrix}, \label{hs}
\end{align}
where $\tilde x(0)=x_0$, $\tilde{\eta}_\Gamma=e^{-\frac{\rho}{2}t}\eta _\Gamma$, and
\begin{align}
\label{H_sys}
H=
\begin{bmatrix}
   \mathcal A & -BR^{-1}B^T \\
   Q_{\Gamma} & -\mathcal A^T\\
\end{bmatrix},\    \mathcal A=A-BR^{-1}B^T\Pi-\frac{\rho}{2}I.
\end{align}
Note that $\tilde{\eta}_\Gamma$ in (\ref{hs}) is a function of $t$.  Since
$
    Q_\Gamma
$
is symmetric, $H$ is a Hamiltonian matrix.



\subsection{Existence and Uniqueness of a Solution}
\label{sec:eubs}

Consider the general matrix differential equation
\begin{align}\label{odez}
\frac{dz}{dt}= Kz+\psi(t),
\end{align}
where $z=[z_1^T, z_2^T]^T$, $z_1,z_2\in \mathbb{R}^n$,
$K\in \mathbb{R}^{2n\times 2n}$ and for some $C>0$, $|\psi(t)|\le C e^{-\frac{\rho t}{2}}$ for all $t\ge 0$, and where $z_1(0)$ is given.

\begin{definition}
The matrix $K\in \mathbb{R}^{2n\times 2n}$ is said to satisfy condition (H0) if  there exists an invertible real matrix $U=(U_{ij})_{1\le i,j\le 2}$,
 where $U_{11}\in \mathbb{R}^{n\times n}$ is invertible, such that
$$
U^{-1}K U=
\begin{bmatrix}
   F_{11} & F_{12} \\
   0 & F_{22}\\
\end{bmatrix},
$$
where $F_{11}$ and $-F_{22}$ are $n\times n$ stable matrices.
\end{definition}

Let $M_1$ be an  $(n+m)\times (n+m)$ real (or complex) matrix which has an $n$-dimensional invariant subspace ${\mathcal V}$.  If ${\mathcal V}$ is spanned by the columns of an $(n+m)\times n$ matrix whose  leading $n\times n$ sub-matrix is invertible, ${\mathcal V}$ is called a graph subspace \cite{BIM11,LR95}.

\begin{remark}
A matrix $K$ satisfying (H0) has $n$ stable
eigenvalues and the associated $n$-dimensional stable invariant
subspace of $K$  is a graph subspace.
\end{remark}

 \begin{lemma}\label{EUBS}
 Suppose $K$ in \eqref{odez} satisfies (H0). Then for the given $z_1(0)$, there exists a unique
\begin{align}
z_2(0)=&
U_{21}U_{11}^{-1} z_1(0) \nonumber\\
&+(U_{21}U_{11}^{-1} U_{12}-U_{22})
\int_0^\infty e^{-F_{22} \tau}[V_{21},V_{22}]\psi(\tau) d\tau \nonumber
 \end{align}
such that \eqref{odez} has a bounded solution on $[0,\infty)$,
where $V=U^{-1}= (V_{ij})_{1\le i,j\le 2}$. In this case,   for some $\epsilon_0>0$,
$ z(t) e^{\varepsilon_0 t}$ is still bounded on $[0,\infty)$.
 \end{lemma}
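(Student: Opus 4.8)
The plan is to decouple \eqref{odez} via the change of variables $w := U^{-1}z$, $w = [w_1^T, w_2^T]^T$ with $w_1, w_2 \in \mathbb{R}^n$. Writing $V = U^{-1} = (V_{ij})$, condition (H0) turns \eqref{odez} into
\begin{align*}
\dot w_1 &= F_{11} w_1 + F_{12} w_2 + [V_{11}, V_{12}]\psi(t),\\
\dot w_2 &= F_{22} w_2 + [V_{21}, V_{22}]\psi(t),
\end{align*}
so that the $w_2$-equation is self-contained. Since $-F_{22}$ is stable, all eigenvalues of $F_{22}$ lie in the open right half-plane; fix $M\ge 1$, $\alpha>0$ with $\|e^{-F_{22}t}\|\le Me^{-\alpha t}$ for $t\ge 0$. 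Because $|\psi(t)|\le Ce^{-\rho t/2}$, the integral $I:=\int_0^\infty e^{-F_{22}\tau}[V_{21},V_{22}]\psi(\tau)\,d\tau$ converges absolutely, and variation of constants yields $e^{-F_{22}t}w_2(t) = w_2(0) + \int_0^t e^{-F_{22}\tau}[V_{21},V_{22}]\psi(\tau)\,d\tau$.

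First I would treat the anti-stable block. If $w_2$ is bounded on $[0,\infty)$, the left-hand side above tends to $0$ as $t\to\infty$ while the right-hand side tends to $w_2(0)+I$, forcing $w_2(0)=-I$. Conversely, with $w_2(0)=-I$ one obtains $w_2(t) = -\int_t^\infty e^{F_{22}(t-\tau)}[V_{21},V_{22}]\psi(\tau)\,d\tau$, and from $\|e^{F_{22}(t-\tau)}\| = \|e^{-F_{22}(\tau-t)}\|\le Me^{-\alpha(\tau-t)}$ for $\tau\ge t$ together with the bound on $\psi$ one gets $|w_2(t)|\le c_1 e^{-\rho t/2}$. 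Hence $w_2(0)=-I$ is the unique choice making $w_2$ bounded, and for it $w_2$ decays exponentially.

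Next, with $w_2$ fixed, the forcing term $g(t):= F_{12}w_2(t)+[V_{11},V_{12}]\psi(t)$ of the $w_1$-equation satisfies $|g(t)|\le c_2 e^{-\rho t/2}$. Since $F_{11}$ is stable, $\|e^{F_{11}t}\|\le M_1 e^{-\beta t}$ for some $M_1,\beta>0$, so variation of constants gives $|w_1(t)|\le M_1 e^{-\beta t}|w_1(0)| + M_1 c_2\int_0^t e^{-\beta(t-\tau)}e^{-\rho\tau/2}\,d\tau$, which is bounded — indeed $O((1+t)e^{-\gamma t})$ with $\gamma=\min(\beta,\rho/2)$ — for every $w_1(0)$. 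Translating back through $z=Uw$: from $z_1(0) = U_{11}w_1(0) + U_{12}w_2(0)$ and invertibility of $U_{11}$, the datum $z_1(0)$ (with $w_2(0)=-I$ already fixed) determines $w_1(0) = U_{11}^{-1}(z_1(0)-U_{12}w_2(0))$ uniquely, hence $w(0)$ and $z(0)$; and
\begin{align*}
z_2(0) &= U_{21}w_1(0)+U_{22}w_2(0) \\
&= U_{21}U_{11}^{-1}z_1(0) + (U_{22}-U_{21}U_{11}^{-1}U_{12})w_2(0),
\end{align*}
which upon substituting $w_2(0)=-I$ is exactly the asserted formula. The solution $z=Uw$ inherits boundedness and the estimate that $z(t)e^{\varepsilon_0 t}$ is bounded (any $\varepsilon_0\in(0,\gamma)$) from $w_1,w_2$, and uniqueness of $z_2(0)$ holds since any bounded solution must have $w_2(0)=-I$ and then $w_1(0)$ as above.

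The argument is essentially standard variation-of-constants combined with the stable/anti-stable dichotomy built into (H0). The only step needing care is the pinning-down of $w_2(0)$ in the anti-stable block — carried out cleanly through $e^{-F_{22}t}w_2(t)\to 0$ rather than by attempting to control $e^{F_{22}t}$ on a specific vector — together with the bookkeeping of exponential rates that produces the final $\varepsilon_0$.
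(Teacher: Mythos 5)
Your proof is correct and follows essentially the same route as the paper: diagonalize via $w=U^{-1}z$, pin down the anti-stable component's initial value as $-\int_0^\infty e^{-F_{22}\tau}[V_{21},V_{22}]\psi(\tau)\,d\tau$, then recover $w_1(0)$ from $z_1(0)$ through the invertibility of $U_{11}$ and assemble $z_2(0)$. Your explicit argument that boundedness of $w_2$ forces $w_2(0)=-I$ (via $e^{-F_{22}t}w_2(t)\to 0$) is a slightly more detailed justification of a step the paper simply asserts, but the substance is identical.
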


\begin{proof} For \eqref{odez}, we apply a change of variable to define
\begin{align}
y=U^{-1} z, \label{yz}
\end{align}
where $y=[y_1^T, y_2^T]^T$, $y_i\in \mathbb{R}^n$.
We have
\begin{align}
&\frac{dy_1}{dt}= F_{11} y_1+F_{12} y_2 +\varphi_1(t),\\
&\frac{dy_2}{dt}= F_{22} y_2 +\varphi_2(t),
\end{align}
where $U^{-1} \psi =[\varphi_1^T, \varphi_2^T]^T$.
We proceed to find a bounded solution $y$. Since $-F_{22}$ is stable, there is a unique choice of
$$
y_2(0)= -\int_0^\infty e^{-F_{22}\tau } \varphi_2(\tau) d\tau
$$
such that $y_2(t)=-\int_t^\infty e^{F_{22}(t-\tau)} \varphi_2(\tau) d\tau $ is bounded, which further determines a bounded $y_1$ regardless of  $y_1(0)$. Using the relation \eqref{yz} at $t=0$, we next uniquely determine \begin{align}\label{y10}
y_1(0)= U_{11}^{-1} [z_1(0)- U_{12} y_2(0)].
\end{align}
 Finally, we obtain
$z_2(0)=U_{21}y_1(0)+U_{22}y_2(0)$, which gives  a bounded solution $z$ of \eqref{odez}
on $[0,\infty)$. It can be checked that for some $\epsilon_0>0$,
$ z(t) e^{\varepsilon_0 t}$ is still bounded on $[0,\infty)$.

The choice of $z_2(0)$ is unique since otherwise by \eqref{yz} we may construct two different bounded solutions $y\ne  \hat y$, where we necessarily have $y_1(0)\ne \hat y_1(0)$, $y_2(0)=\hat y_2(0)$, which is impossible in view of \eqref{y10}.  \end{proof}


%
%




 The proof of the existence result in the theorem below  reduces to showing the stable invariant subspace of $H$ is a graph subspace.

\begin{theorem}
\label{E_np}
Assume that the pair $(A,B)$ is stabilizable and the Hamiltonian matrix $H$  in \eqref{H_sys}  has no  eigenvalues with zero real parts. Then there exists a unique initial condition $s_0$ such that \eqref{SCE1_1}-\eqref{SCE2_1}
 has a  solution $(\bar x,s)\in C_{\rho/2}([0,\infty),\mathbb R^{2n})$.
\end{theorem}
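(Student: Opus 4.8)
The plan is to transform to the system \eqref{hs}, verify that its coefficient matrix $H$ in \eqref{H_sys} satisfies condition (H0), and then simply quote Lemma \ref{EUBS}; the substantive work is all in the (H0) verification, which, as the text remarks just before the theorem, amounts to showing that the $n$-dimensional stable invariant subspace of $H$ is a graph subspace.

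To do this I would identify $H$ with the Riccati-type Hamiltonian $H_o$ of \eqref{Ho} by taking $A_o=\mathcal A$, $M=BR^{-1}B^T\in S^n_+$, and $Q_o=-Q_\Gamma\in S^n$, so that $H=H_o$. Under the standing assumption (SA), the maximal solution $\Pi$ of \eqref{ARE} makes $\mathcal A=A-BR^{-1}B^T\Pi-\tfrac\rho2 I$ stable, so $(A_o,M)$ is trivially stabilizable; combined with the hypothesis that $H$ has no eigenvalues with zero real parts, Proposition \ref{cor:ric} furnishes a real symmetric solution $X_+$ of \eqref{CARE} with $A_o-MX_+$ stable. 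A one-line computation using \eqref{CARE} gives $H_o\bigl[\begin{smallmatrix} I\\ X_+\end{smallmatrix}\bigr]=\bigl[\begin{smallmatrix} I\\ X_+\end{smallmatrix}\bigr](A_o-MX_+)$, so the columns of $\bigl[\begin{smallmatrix} I\\ X_+\end{smallmatrix}\bigr]$ span an $n$-dimensional $H$-invariant subspace on which $H$ acts with only stable eigenvalues; since $H$ is Hamiltonian with no imaginary eigenvalues it has exactly $n$ stable eigenvalues, so this is precisely the stable invariant subspace of $H$, and it is a graph subspace.

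Next I would build $U$ with first $n$ columns equal to $\bigl[\begin{smallmatrix} I\\ X_+\end{smallmatrix}\bigr]$ (hence $U_{11}=I$ invertible) and last $n$ columns a real basis of the $n$-dimensional unstable invariant subspace of $H$; then $U^{-1}HU=\mathrm{diag}(F_{11},F_{22})$ with $F_{11}=A_o-MX_+$ stable and $F_{22}$ carrying the unstable eigenvalues of $H$, which by Hamiltonian symmetry are the negatives of the stable ones, so $-F_{22}$ is stable. Thus $H$ satisfies (H0). Applying Lemma \ref{EUBS} with $K=H$, $z=[\tilde x^T,\tilde s^T]^T$, $z_1(0)=x_0$, and $\psi=[0,\tilde\eta_\Gamma^T]^T$ (which satisfies $|\psi(t)|\le Ce^{-\rho t/2}$) yields a unique $\tilde s(0)$ making \eqref{hs} have a bounded solution $z$, with moreover $z(t)e^{\epsilon_0 t}$ bounded for some $\epsilon_0>0$, which we may take in $(0,\rho/2)$.

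Finally I would translate back: set $s_0=\tilde s(0)$ and $(\bar x,s)=e^{\rho t/2}(\tilde x,\tilde s)$; the bound $|z(t)|\le Ce^{-\epsilon_0 t}$ gives $|\bar x(t)|,|s(t)|\le Ce^{(\rho/2-\epsilon_0)t}$, so $(\bar x,s)\in C_{\rho/2}([0,\infty),\mathbb R^{2n})$ with $r'=\rho/2-\epsilon_0$. For uniqueness, any $C_{\rho/2}$ solution of \eqref{SCE1_1}-\eqref{SCE2_1} has $e^{-\rho t/2}(\bar x,s)$ bounded (indeed decaying), hence gives a bounded solution of \eqref{hs} with the prescribed $z_1(0)=x_0$, so the uniqueness clause of Lemma \ref{EUBS} forces $s_0$ to be unique. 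The only conceptually nontrivial step is the graph-subspace property, and even that becomes routine once (SA) is used to recognize $\mathcal A$ as stable so that Proposition \ref{cor:ric} applies; the remainder is bookkeeping with the weight $e^{\rho t/2}$.
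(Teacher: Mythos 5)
Your proposal is correct and follows essentially the same route as the paper: recognize $H$ as the Hamiltonian $H_o$ of the ARE \eqref{CARE_pr2}, invoke Proposition \ref{cor:ric} (using stability of $\mathcal A$ for stabilizability) to get the stabilizing solution $X_+$, use $X_+$ to verify (H0), and conclude via Lemma \ref{EUBS}. The only cosmetic difference is that the paper takes $U=\bigl[\begin{smallmatrix} I & 0\\ X_+ & I\end{smallmatrix}\bigr]$, giving a block upper-triangular form with $F_{22}=-\mathcal A_C^T$, rather than your block-diagonal form built from the unstable invariant subspace; both verify (H0) equally well.
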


\begin{proof}
Since $\mathcal A$ is stable, both $({\mathcal A}, B)$ and $(\mathcal A,BR^{-1}B^T)$ are stabilizable \cite{W12}.
Consider the ARE
\begin{align}
X\mathcal A+\mathcal A^TX-XBR^{-1}B^TX-Q_\Gamma=0. \label{CARE_pr2}
\end{align}

By Corollary \ref{cor:ric}, there exists a unique maximal real symmetric solution $X_+$ such that $\mathcal A-BR^{-1}B^TX_+$ is stable.
Denote $U=
\begin{bmatrix}
I & 0\\
X_+ & I
\end{bmatrix}
$.
So
$U^{-1}=
\begin{bmatrix}
I & 0\\
-X_+ & I
\end{bmatrix}.
$ Then
\begin{align*}
U^{-1}HU&=\begin{bmatrix}
I & 0\\
-X_+ & I
\end{bmatrix}
\begin{bmatrix}
\mathcal A & -BR^{-1}B^T \\
Q_{\Gamma} & -\mathcal A^T\\
\end{bmatrix}
\begin{bmatrix}
I & 0\\
X_+ & I
\end{bmatrix}\\
&=
\begin{bmatrix}
\mathcal A_C & -BR^{-1}B^T\\
0 & -\mathcal A_C^T
\end{bmatrix},
\end{align*}
where $\mathcal A_C=\mathcal A-BR^{-1}B^TX_+$ is stable.
By Lemma \ref{EUBS}, after selecting the initial condition $s_0=X_+ x_0-\int_{0}^{\infty}e^{\mathcal{A}^T_C\tau}\eta_\Gamma e^{-\rho\tau/2} d\tau$, the resulting  solution $(\bar x,s)\in C_{\rho/2}([0,\infty),\mathbb R^{2n})$. And $s_0$ is unique.
\end{proof}

For the special case $Q_\Gamma\le 0$,  since ${\mathcal A}$ is stable, \eqref{CARE_pr2} has a unique positive semi-definite solution $X$ by the standard theory of Riccati equations \cite{W12}. On the other hand, by \cite[Theorem 9.3.3]{LR95}, in this case $H$ necessarily has no eigenvalues with zero real parts.


\begin{example}
\label{ex_h2x2}
Consider a scalar model with $A=a$, $B=b\ne 0$, $R=r>0$, $Q=q> 0$, $\Gamma=\gamma$. Then $Q_\Gamma=(2\gamma-\gamma^2)q$. Denote $a_\rho = a-\rho/2$ and $b_r= b/\sqrt{r}$. We solve the Riccati equation
$\rho\Pi=2a\Pi-b^2\Pi/r+q$ 
 to obtain $\Pi= (a_\rho +\sqrt{a_\rho^2 +qb_r^2})/b_r^2$.
Then $H$ in \eqref{H_sys} becomes
 \begin{align*}
H=\begin{bmatrix}
-\sqrt{a_\rho^2+qb_r^2} & -b_r^2\\
(2\gamma- \gamma^2)q & \sqrt{a_\rho^2+qb_r^2}
\end{bmatrix}.
\end{align*}
The characteristic equation $\det (\lambda I- H) =0 $ reduces to   $\lambda^2=a_\rho^2+qb_r^2(1-\gamma)^2$.
Therefore, $H$ has eigenvalues with zero real parts if and only if
 $a=\rho/2$ and $\gamma=1$ when $b\ne 0$ and $q>0$.
\end{example}

\begin{example}
\label{ex_img}
 We continue with the system in  Example  \ref{ex_h2x2} for the case  $a=\rho/2$ and $\gamma=1$.
The SCE equation system now becomes
\begin{align*} 
\begin{bmatrix}
\dot{\bar x}(t)\\
\dot s(t)
\end{bmatrix} =
\begin{bmatrix}
\frac{\rho}{2}-\sqrt{q}|b_r| &  -b_r^2 \\
q &  \frac{\rho}{2} + \sqrt{q} |b_r|
\end{bmatrix}
\begin{bmatrix}
{\bar x}(t)\\
 s(t)
\end{bmatrix},
\end{align*}
where $\bar x(0)$ is given. We obtain the solution
\begin{align*}
\begin{bmatrix}
{\bar x}(t)\\
 s(t)
\end{bmatrix} = e^{ \frac{\rho t}{2}} \begin{bmatrix}
1-\sqrt{q}|b_r|t &  -b_r^2 t \\
qt & 1+ \sqrt{q} |b_r| t
\end{bmatrix} \begin{bmatrix}
{\bar x}(0)\\
 s(0)
\end{bmatrix},
\end{align*}
which is not in $C_{\rho/2}([0,\infty),\mathbb R^{2})$  unless  $\bar x(0)= s(0)=0$.\end{example}

\begin{example}
\label{ex_margstab}
Consider the system given in Example \ref{ex_img}. We have $\Pi=\sqrt{q}/|b_r|$, ${\mathcal A}=-\sqrt{q} |b_r|$,
$Q_\Gamma= q$. The Riccati equation \eqref{CARE_pr2} now has the solution $X=-\sqrt{q}/|b_r|<0$, and ${\mathcal A}-b_r^2X=0$ implying $X$ being almost stabilizing, which is due to the two zero eigenvalues of $H$.
\end{example}


\subsection{Computational Methods for the ARE}
\label{sec:cm}

Consider ARE (\ref{CARE}).
 Let  $H_o$ be defined by \eqref{Ho}. This part describes the numerical  method
for a stabilizing solution when $Q_o$ may not be positive semi-definite.
Denote
$$W=\begin{bmatrix}U_1\\U_2\end{bmatrix}\in \mathbb C^{2n\times n}.
$$

\begin{proposition}
\label{cx_ee}
Suppose i) $H_o$ has no eigenvalues with zero real parts and
\begin{align}
H_o\begin{bmatrix} U_1\\ U_2\end{bmatrix}=
\begin{bmatrix} U_1\\U_2 \end{bmatrix}S_o, \label{HoS2}
\end{align}
where $S_o$ is stable; ii) $(A_o, M)$ is stabilizable.  Then $U_1$ is invertible and $U_2U_1^{-1}$ is real, symmetric and satisfies (\ref{CARE}), and $A_o-MU_2U_1^{-1}$ is stable.
\end{proposition}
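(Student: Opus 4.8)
The plan is to run the classical invariant‑subspace argument for solving an algebraic Riccati equation through its Hamiltonian matrix, adapted here to a possibly indefinite $Q_o$ and to a complex spanning matrix $W$.

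\emph{Step 1: identify the subspace.} First I would show that $\operatorname{Range}(W)$ is exactly the $n$‑dimensional stable invariant subspace $\mathcal X_-(H_o)$ of $H_o$. By hypothesis~(i) $H_o$ has no eigenvalue on the imaginary axis, so (as noted after \eqref{Ho}) it has precisely $n$ eigenvalues in the open left half‑plane. Since $S_o$ is stable and (as in the Schur‑based construction of $W$) its columns are linearly independent, $\operatorname{Range}(W)$ is an $n$‑dimensional $H_o$‑invariant subspace whose spectrum lies in the open left half‑plane, hence it must coincide with $\mathcal X_-(H_o)$.

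\emph{Step 2: invertibility of $U_1$ (the crux).} The heart of the proof, and the only place where stabilizability of $(A_o,M)$ is genuinely used, is to show that $U_1$ is invertible, i.e.\ that $\mathcal X_-(H_o)$ is a graph subspace. Here I exploit that $H_o$ is Hamiltonian: with $J=\bigl[\begin{smallmatrix}0 & I\\ -I & 0\end{smallmatrix}\bigr]$ one has $H_o^{T}J+JH_o=0$, and a short computation using $H_o\overline W=\overline W\,\overline{S_o}$ shows that $Z:=W^{*}JW$ solves the Sylvester equation $S_o^{*}Z+ZS_o=0$; since $S_o$ is stable, $\operatorname{spec}(S_o^{*})$ and $\operatorname{spec}(-S_o)$ are disjoint, forcing $Z=0$, i.e.\ $U_1^{*}U_2=U_2^{*}U_1$ (and likewise $U_1^{T}U_2=U_2^{T}U_1$ if transpose is used throughout). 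Now suppose $U_1x=0$ with $x\neq 0$. Writing the block rows of \eqref{HoS2} as $A_oU_1-MU_2=U_1S_o$ and $-Q_oU_1-A_o^{T}U_2=U_2S_o$ and evaluating the first at $x$ gives $U_1S_ox=-MU_2x$; pairing with $(U_2x)^{*}$ and using $U_1x=0$ together with $U_2^{*}U_1=U_1^{*}U_2$ yields $(U_2x)^{*}M(U_2x)=0$, hence $MU_2x=0$ because $M\ge 0$, and therefore $U_1S_ox=0$. Thus $\ker U_1$ is a nonzero $S_o$‑invariant subspace, so it contains an eigenvector $x$ of $S_o$ with a (necessarily stable) eigenvalue $\mu$; evaluating the second block row at $x$ gives $A_o^{T}(U_2x)=-\mu\,(U_2x)$, where $U_2x\neq 0$ (otherwise $Wx=0$). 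Hence $\overline{U_2x}$ is a left eigenvector of $A_o$ with eigenvalue in the open right half‑plane that annihilates $M$, contradicting the Hautus test for stabilizability of $(A_o,M)$. This contradiction gives $\ker U_1=\{0\}$.

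\emph{Step 3: read off the conclusions.} Set $X:=U_2U_1^{-1}$. Right‑multiplying the first block row of \eqref{HoS2} by $U_1^{-1}$ gives $A_o-MX=U_1S_oU_1^{-1}$, so $A_o-MX$ is similar to the stable matrix $S_o$ and is therefore stable; right‑multiplying the second block row by $U_1^{-1}$ and substituting $U_1S_oU_1^{-1}=A_o-MX$ gives $-Q_o-A_o^{T}X=X(A_o-MX)$, which rearranges to \eqref{CARE}. From $U_1^{T}U_2=U_2^{T}U_1$ we obtain $X^{T}=X$. Finally, $\overline X$ also solves \eqref{CARE} and makes $A_o-M\overline X$ stable, and since a stabilizing solution of \eqref{CARE} is unique (the difference of two such solutions satisfies a Lyapunov equation with disjoint coefficient spectra and hence vanishes), $\overline X=X$, so $X$ is real. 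The main obstacle is Step~2; Steps~1 and~3 are routine bookkeeping with \eqref{HoS2} and the Hamiltonian structure.
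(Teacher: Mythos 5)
Your proof is correct, and it is essentially the classical invariant-subspace argument that the paper itself invokes by citing Theorems 13.5--13.6 of \cite{ZDG96} (graph-subspace property via $W^*JW=0$, the $S_o$-invariance of $\ker U_1$, and the Hautus test against stabilizability of $(A_o,M)$); you have simply written it out in full rather than deferring to the reference. The only point worth flagging is that, as you note in Step 1, the hypothesis \eqref{HoS2} must be read as requiring $[U_1^T, U_2^T]^T$ to have full column rank (i.e., to span the $n$-dimensional stable invariant subspace), an assumption the paper also makes implicitly via the Schur-vector construction.
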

\begin{proof} This proposition holds as a corollary to
Theorems 13.5 and 13.6 in \cite{ZDG96} under condition ii).
In this case the invariant subspace of $H_o$ associated with the $n$ stable eigenvalues is a (complex) graph subspace, and $U_1$ is necessarily invertible.  \end{proof}

In fact,  by  Proposition  \ref{cor:ric}, there exists $X$ satisfying (\ref{CARE}) such that $A_o-MX$ is stable. 
 It is straightforward to verify \cite{BIM11}
 $$H_o\begin{bmatrix} I \\
 X\end{bmatrix}=\begin{bmatrix} I \\ X\end{bmatrix}(A_o-MX).$$

\begin{remark}
 Since $H_o$ has $n$ eigenvalues in the open left and right half planes, respectively, there exist
  $U_1, U_2$
to satisfy condition i) in Proposition \ref{cx_ee}.
\end{remark}


 A similar method of using invariant subspace to solve a discrete-time algebraic Riccati equation was presented in \cite{PLS80}, where the state weight matrix $Q$ is positive semi-definite.


To  apply Proposition \ref{cx_ee} to numerically solve the ARE, one needs to first find a set of basis vectors of the stable invariant subspace of $H_o$.
 Now we introduce a  convenient method to find such a set of vectors.
\begin{proposition}\cite{L79}
Assume  the Hamiltonian matrix $H_o \in \mathbb R^{2n\times 2n} $\ has no eigenvalues with zero real parts.  Then
there exists an orthogonal transformation $W\in \mathbb R^{2n\times 2n}$ such that
$$
W^TH_oW=\begin{bmatrix}
H_{11} & H_{12}\\
0 & H_{22}
\end{bmatrix} =\widehat  H_o,
$$
where  $H_{11}\in \mathbb{R}^{n\times n} $ is a stable matrix. \endproof
\end{proposition}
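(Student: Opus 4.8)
The plan is to construct $W$ as the composition of the orthogonal factor of a real Schur decomposition of $H_o$ with a further orthogonal transformation that reorders the diagonal blocks of the Schur form, so that all stable eigenvalues are gathered in the leading position. The first ingredient is the eigenvalue count: since $H_o$ is a Hamiltonian matrix its spectrum is symmetric about the imaginary axis (as noted right after \eqref{Ho}), and the hypothesis that $H_o$ has no eigenvalue with zero real part then forces exactly $n$ eigenvalues (with multiplicity) in the open left half plane and $n$ in the open right half plane.

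Next I would invoke the real Schur decomposition: there is an orthogonal $W_0\in\mathbb R^{2n\times 2n}$ with $W_0^T H_o W_0 = T$, where $T$ is block upper triangular with diagonal blocks of size $1\times 1$ (real eigenvalues) or $2\times 2$ (complex-conjugate pairs). Because $H_o$ has no purely imaginary eigenvalues, each diagonal block is \emph{either} entirely stable \emph{or} entirely unstable — no block straddles the axis — and by the count above the stable blocks together occupy an $n\times n$ portion of the diagonal and the unstable ones the complementary $n\times n$ portion. It is then a standard fact that adjacent diagonal blocks of a real Schur form can be swapped by an orthogonal similarity (e.g.\ solve a small Sylvester equation for the swapping transformation and orthogonalize it by a QR step), so for the desired permutation of the blocks there is an orthogonal $P$ with $P^T T P$ again block upper triangular and with the stable blocks moved to the top. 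Setting $W = W_0 P$, which is orthogonal, we get
$$
W^T H_o W = P^T T P = \begin{bmatrix} H_{11} & H_{12}\\ 0 & H_{22}\end{bmatrix},
$$
with $H_{11}\in\mathbb R^{n\times n}$ collecting exactly the $n$ stable eigenvalues, hence stable, and $H_{22}$ the $n$ unstable ones.

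The only point that requires care is the reordering step: one must know that neighbouring $1\times1$/$2\times2$ blocks can always be interchanged \emph{orthogonally}, and that no step ever produces a block whose eigenvalues lie on both sides of the axis. The latter cannot occur precisely because no eigenvalue of $H_o$ is purely imaginary, so throughout the reordering every block stays wholly stable or wholly unstable; the former is the classical block-swapping procedure. This is exactly the algorithm of \cite{L79}. An equivalent route, avoiding explicit block swaps, is to compute an orthonormal basis of the $n$-dimensional stable invariant subspace of $H_o$ directly and extend it to an orthonormal basis of $\mathbb R^{2n}$, taking $W$ to have these vectors as columns; block triangularity with a stable leading block then follows from invariance of the first $n$ columns' span.
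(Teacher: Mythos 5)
Your proof is correct and is essentially the argument of the cited reference \cite{L79}: the paper itself gives no proof of this proposition, simply citing Laub, and your route (real Schur form, eigenvalue count of $n$/$n$ from the Hamiltonian spectral symmetry and the no-imaginary-eigenvalue hypothesis, then orthogonal reordering of the $1\times1$/$2\times2$ diagonal blocks to gather the stable ones in the leading position) is exactly the standard construction behind that result. Your closing observation that one may instead take $W$'s first $n$ columns to be an orthonormal basis of the stable invariant subspace is also valid and is the form in which the decomposition is actually used in Section~\ref{sec:cm}.
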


We refer to  $\widehat H_o$  as the real Schur form and $W$ consists of $2n$ independent vectors which are called Schur vectors. If we partition $W$ into four $n \times n$ blocks
$\begin{bmatrix}
W_{11} & W_{12}\\
W_{21} & W_{22}
\end{bmatrix},$ $\begin{bmatrix}
W_{11}\\W_{21}
\end{bmatrix}
$ consists of $n$ Schur vectors corresponding to stable Schur block $H_{11}$
and provides  a specific choice of the vectors to span the stable invariant subspace in Proposition \ref{cx_ee} and $W_{11}^{-1}$ exists.

\section{Extension to Mean Field Games }
\label{chp:mggp}


We consider a Nash game  of $N$ players with dynamics and costs given by \eqref{doa}-\eqref{ic}.
By mean field game theory \cite{HCM03, HCM07,HCM12}, the decentralized strategies for the game may be designed by using the following ODE system:
\begin{numcases}{}
    \frac{d\bar{x}}{dt}=(A-BR^{-1}B^T\Pi)\bar{x}-BR^{-1}B^Ts,\label{MFG_H1}\\
    \frac{ds}{dt}=Q\Gamma\bar{x}+(\rho I-A^T+\Pi BR^{-1}B^T)s+Q\eta,\label{MFG_H2}
\end{numcases}
where $\bar x(0)=x_0$ is given.
Define $\tilde x=e^{-\rho t/2}\bar{x}$ and $\tilde s=e^{-\rho t/2}s$. We obtain
\begin{numcases}{}
    \frac{d\tilde x}{dt}=\mathcal{A}\tilde x-BR^{-1}B^T\tilde s,\label{MFG_H3}\\
    \frac{d\tilde s}{dt}=Q\Gamma\tilde x-\mathcal{A}^Ts+\tilde\eta,\label{MFG_H4}
\end{numcases}
where $\tilde x(0)=x_0$,  $\mathcal{A}=A-BR^{-1}B^T\Pi-\frac{\rho}{2}I$, $\tilde\eta=e^{-\rho t/2}Q\eta$.

Notice that $Q\Gamma$ is generally asymmetric and the
coefficient matrix in \eqref{MFG_H3}-\eqref{MFG_H4} does not have a Hamiltonian structure.
 However, we can apply the invariant subspace method in Section \ref{sec:eubs} to find a solution $(\bar x, s)\in C_{\rho/2}([0,\infty),\mathbb R^{2n})$.
Denote
\begin{align}
M_{\rm mfg}=\begin{bmatrix}
\mathcal A & -BR^{-1}B^T\\
Q\Gamma & -\mathcal A^T
\end{bmatrix}. \label{mfgM}
\end{align}

\begin{theorem}
\label{EUBS_M} Suppose  $M_{\rm mfg}$ in \eqref{mfgM}
satisfies condition (H0) with $U=(U_{ij})_{1\le i,j\le 2}$, where $U_{11}$ is invertible,
such that
$$ U^{-1}M_{\rm mfg}U=
\begin{bmatrix}
   M_{11} & M_{12} \\
   0 & M_{22}\\
\end{bmatrix},
$$
where $M_{11}$  and $-M_{22}$ are stable.
 Then for any given $x_0$ in (\ref{MFG_H1})-(\ref{MFG_H2}), there exists a unique
$$\hskip -0.5 eM s_0  =U_{21}U_{11}^{-1} x_0+(U_{21}U_{11}^{-1}U_{12}-U_{22})
 \int_{0}^{\infty}e^{-M_{22}\tau}V_{22}Q\eta e^{-\frac{\rho}{2}\tau}d\tau,   $$
 where $V=U^{-1}=(V_{ij})_{1\le i,j\le 2}$,
 such that  (\ref{MFG_H1})-(\ref{MFG_H2}) has a  solution $(\bar x,s)\in C_{\rho/2}([0,\infty),\mathbb R^{2n})$.

\end{theorem}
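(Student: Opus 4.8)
The plan is to recognize the statement as a direct instance of Lemma \ref{EUBS}, once the system is put in the right normal form. First I would apply the scaling $\tilde x = e^{-\rho t/2}\bar x$, $\tilde s = e^{-\rho t/2}s$ already recorded in the excerpt, which turns (\ref{MFG_H1})--(\ref{MFG_H2}) into (\ref{MFG_H3})--(\ref{MFG_H4}). Writing $z = [\tilde x^T,\tilde s^T]^T$, this is exactly an equation of the form $dz/dt = Kz + \psi(t)$ with $K = M_{\rm mfg}$, $z_1(0) = x_0$, and $\psi(t) = [\,0^T,\ (Q\eta)^T\,]^T e^{-\rho t/2}$; the forcing bound $|\psi(t)|\le Ce^{-\rho t/2}$ holds trivially with $C = |Q\eta|$. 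By hypothesis $M_{\rm mfg}$ satisfies (H0) with the stated $U = (U_{ij})$, $M_{11}$, $M_{22}$, so all the hypotheses of Lemma \ref{EUBS} are met with $F_{ij} = M_{ij}$.

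Next, Lemma \ref{EUBS} yields a unique value $\tilde s(0) =: s_0$ for which the transformed system has a solution bounded on $[0,\infty)$, and moreover $z(t)e^{\epsilon_0 t}$ is bounded for some $\epsilon_0 > 0$. To bring the expression for $s_0$ into the form in the statement, I would simplify the integrand in Lemma \ref{EUBS}: since the first block of $\psi$ vanishes, $[V_{21},V_{22}]\psi(\tau) = V_{22}Q\eta\,e^{-\rho\tau/2}$, so that
$$s_0 = U_{21}U_{11}^{-1}x_0 + \bigl(U_{21}U_{11}^{-1}U_{12} - U_{22}\bigr)\int_0^\infty e^{-M_{22}\tau}V_{22}Q\eta\,e^{-\frac{\rho}{2}\tau}\,d\tau,$$
which is precisely the claimed formula (the integral converges since $-M_{22}$ is stable).

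Finally I would undo the scaling. Shrinking $\epsilon_0$ if necessary so that $0 < \epsilon_0 < \rho/2$, the bound on $z(t)e^{\epsilon_0 t}$ gives $|\bar x(t)| + |s(t)| = e^{\rho t/2}\bigl(|\tilde x(t)| + |\tilde s(t)|\bigr) \le C' e^{(\rho/2 - \epsilon_0)t}$, hence $(\bar x, s)\in C_{\rho/2}([0,\infty),\mathbb R^{2n})$ with witness $r' = \rho/2 - \epsilon_0 \in (0,\rho/2)$. For uniqueness, conversely any $C_{\rho/2}$ solution satisfies $|\bar x(t)|,|s(t)|\le C'' e^{r''t}$ with $r'' < \rho/2$, so the corresponding $\tilde x,\tilde s$ decay and $z$ is bounded; the uniqueness clause of Lemma \ref{EUBS} then pins $\tilde s(0)$ to the $s_0$ above, so $s_0$ is the unique admissible initial condition. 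There is essentially no obstacle here: because (H0) is assumed outright, no Riccati solve or Hamiltonian eigenvalue splitting (as in Theorem \ref{E_np}) is needed, and in particular the asymmetry of $Q\Gamma$ is irrelevant. The only point needing a little care — the passage between ``bounded on $[0,\infty)$'' in Lemma \ref{EUBS} and membership in $C_{\rho/2}$ — is settled by the exponential-margin refinement together with the bijectivity of the scaling, which transfers both existence and uniqueness at once.
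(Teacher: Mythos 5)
Your proposal is correct and follows exactly the route the paper takes: the paper's proof of this theorem is literally the one-line remark that it ``follows from Lemma \ref{EUBS},'' and your write-up simply fills in the details of that reduction (the exponential scaling to (\ref{MFG_H3})--(\ref{MFG_H4}), the identification $K=M_{\rm mfg}$ with $\psi=[0^T,(Q\eta)^T]^Te^{-\rho t/2}$, the simplification $[V_{21},V_{22}]\psi=V_{22}Q\eta e^{-\rho\tau/2}$, and the back-and-forth between boundedness of $z$ and membership of $(\bar x,s)$ in $C_{\rho/2}$). No discrepancies to report.
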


\begin{proof} The theorem follows from Lemma \ref{EUBS}.
\end{proof}

\section{Numerical Examples}
\label{sec:num}

\subsection{Riccati Equation and SCE Equation System }

\label{subs:CR}

Consider ARE \eqref{CARE_pr2},
where $\mathcal A=A-BR^{-1}B^T\Pi-\frac{\rho}{2}I$. We compute the stabilizing solutions of \eqref{ARE} and \eqref{CARE_pr2} and further solve the SCE equation system.
In the examples, we specify the system parameters, including the matrix $A$, which further determine $\mathcal A$. The computation follows the notation in Theorem  \ref{E_np} and its proof.
\begin{example}
\label{ex_meth1}
Consider the scalar system: $A=2,\ B=1,\ Q=2,\ R=1,\ \eta=1,\ \rho=1,\ \Gamma=1$ and the initial condition $x_0=1$. We have $Q_\Gamma=Q>0$ and $\Pi=3.5616$.
The SCE equation system \eqref{SCE1_1}-\eqref{SCE2_1}
 becomes
$$
\begin{bmatrix}
   \frac{d\bar{x}}{dt} \\
   \frac{ds}{dt} \\
\end{bmatrix}
=
\begin{bmatrix}
   -1.5616 & -1.0000\\
    2  & 2.5616
\end{bmatrix}
\begin{bmatrix}
   \bar{x} \\
   s \\
\end{bmatrix}
,
$$ and
$$H=
\begin{bmatrix}
   -2.0616 & -1.0000\\
    2.0000 &  2.0616
\end{bmatrix}.
$$
The eigenvalues of $H$ are $-1.5$ and $1.5$, which have no zero real parts.
By solving  \eqref{CARE_pr2} using Schur vectors, we obtain $X_+=-0.5615$ and  $\mathcal A_C=-1.5$.

We select  $s_0=X_+ x_0-\int_{0}^{\infty}e^{(\mathcal{A}^T_C-\frac{\rho}{2}I)\tau}\eta_\Gamma d\tau=-0.5615$. Under the initial condition $(x_0,s_0)=(1, -0.5615)$, we obtain  $(\bar x(t), s(t))=(e^{-t},-0.5616e^{-t})\in C_{1/2}([0,\infty),\mathbb R^{2})$.


\end{example}

\begin{example}
\label{ex_222}
Consider the system with parameters
\begin{align*}
&A=
\begin{bmatrix}
1 & -1\\
0 & 2
\end{bmatrix},\
B=
\begin{bmatrix}
1\\1
\end{bmatrix},\
Q=
\begin{bmatrix}
1&0\\
0&-0.5
\end{bmatrix},\
\eta=
\begin{bmatrix}
1\\0
\end{bmatrix},\\
&
\Gamma=\gamma\begin{bmatrix}
1 & 0\\
0.5 & 1
\end{bmatrix},\ \rho=1,\  R=1,\  \gamma=2
\end{align*}
 and initial condition $x_0=\begin{bmatrix}
1\\1
\end{bmatrix}.
$
We have $Q_\Gamma=
\begin{bmatrix}
0.5 & 0.5\\
0.5 & 0
\end{bmatrix}.
$  Both $Q$ and $Q_\Gamma$ are indefinite.
We solve
$$
\Pi= \begin{bmatrix}
 3.5483 &  -5.6810 \\
  -5.6810 &  12.6724
\end{bmatrix}.
$$
The SCE equation system is
\begin{align*}
\begin{bmatrix}
   \frac{d\bar{x}_1}{dt} \\
   \frac{d\bar{x}_2}{dt} \\
   \frac{ds_1}{dt}\\
   \frac{ds_2}{dt} \\
\end{bmatrix}
=
(H+\frac{\rho}{2} I)
& \begin{bmatrix}
   \bar{x}_1 \\
   \bar{x}_2 \\
   s_1 \\
   s_2 \\
\end{bmatrix}
+
\begin{bmatrix}
0\\0\\-1\\0
\end{bmatrix}
,
\end{align*}
 and the Hamiltonian matrix
\begin{align*}
H&=
\begin{bmatrix}
    2.6327  & -7.9914  & -1.0000  & -1.0000\\
    2.1327  & -5.4914  & -1.0000  & -1.0000\\
    0.5000  &  0.5000  & -2.6327  & -2.1327\\
    0.5000  &       0  &  7.9914  &  5.4914
\end{bmatrix}.
\end{align*}
The eigenvalues of $H$ are $-1.0655 \pm 0.6208i$, $1.0655 \pm 0.6208i$, so $H$ has no eigenvalues with zero real parts.
Solving  \eqref{CARE_pr2} with Schur vectors, we have
$$X_+=\begin{bmatrix}
   -2.0373  &  2.7519\\
    2.7519  & -4.1941
\end{bmatrix},\quad
\mathcal A_C=\begin{bmatrix}
    1.9181  & -6.5492\\
    1.4181  & -4.0492
\end{bmatrix}.$$

We select $$s_0=X_+
x_0-\int_{0}^{\infty}e^{(\mathcal{A}^T_C-\frac{\rho}{2}I)\tau}\eta_\Gamma d\tau=\begin{bmatrix}
  2.3185\\
 -3.7513
\end{bmatrix}.$$
 For the initial condition $(1,1$, $2.3185,-3.7513)$, we compute the  solution $(\bar x, s)$ which is displayed in Fig. \ref{ex2_1}.
\begin{figure}[t]
\begin{center}
\begin{tabular}{c}
\psfig{file=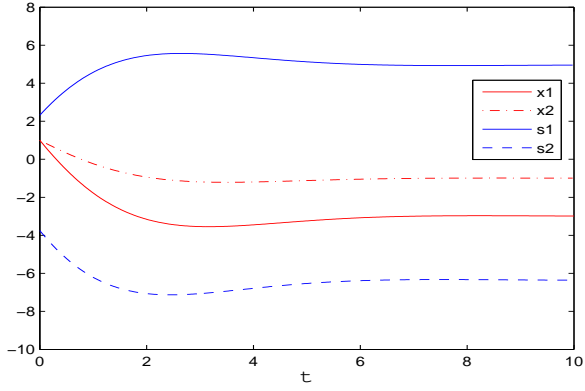, width=3.6in, height=2.2in}\\
\end{tabular}
\end{center}
\caption{ Solution of $(\bar x, s)$ in Example \ref{ex_222} } \label{ex2_1}
\end{figure}

\end{example}

\subsubsection{Comparison}
\label{subs:C}
We compare with a fixed point method, which is used to analyze the SCE equation system by verifying a contraction condition in \cite{HCM12}.
Consider  (\ref{hs}).
By the method in \cite{HCM12,NCMH12}, the solution $\tilde x(t)$ is a fixed point to the equation
$x(\cdot)=\Psi( x(\cdot))$
where
\begin{align*}
&[\Psi(x)](t)=
e^{\mathcal At}x_0+\int_0^t e^{\mathcal A(t-s)}BR^{-1}B^T\\
&\qquad\qquad\qquad\cdot\left[ \int_s^\infty e^{-\mathcal A^T(s-\tau)}\left(Q_\Gamma x(\tau)
+e^{-\frac{\rho}{2}\tau}\eta_\Gamma\right)d\tau\right]ds,
\end{align*}
where we look for $x(\cdot)\in C_b([0,\infty), \mathbb{R}^n)$, i.e., the set of bounded and continuous functions on $[0, \infty)$ with norm $|x|_\infty= \sup_{t\ge 0}|x(t)|$.
The fixed point exists and is unique if there exists  $\beta\in(0,1)$ such that
$
|\Psi (x)-\Psi (y)|_\infty\leq\beta|x-y|_\infty.
$
 Let $\|\cdot\|$ denote the Frobenius norm.
We have the estimate
\begin{align*}
&\|[\Psi (x)](t)-[\Psi (y)](t)\|\\
&=
\|\int_0^t e^{\mathcal A(t-s)}BR^{-1}B^T\\
&\qquad\qquad\cdot\left\{ \int_s^\infty e^{-\mathcal A^T(s-\tau)}
\left[Q_\Gamma\left(x(\tau)-y(\tau)\right)\right]d\tau\right\}ds\|\\
&\leq\left\|x-y\right\|\int_0^\infty \|e^{\mathcal As}BR^{-1}B^T\|\left( \int_0^\infty \|e^{\mathcal A^T\tau}Q_\Gamma\|d\tau\right)ds.
\end{align*}
Let $\beta=\int_0^\infty \left\|e^{\mathcal As}BR^{-1}B^T\right\|( \int_0^\infty \|e^{\mathcal A^T\tau}Q_\Gamma\|d\tau)ds$.  We note that the upper bound estimate may not be tight.

For Example \ref{ex_222} with $\gamma=2$, we numerically obtain $\beta=6.34694>1$, which does not validate the contraction condition. If we set $\gamma=0.05$ instead, then $\beta=0.736681<1$ implying  the contraction condition.

\subsection{The Mean Field Game}

 The next example  uses the Schur decomposition for a general square real matrix.
\begin{example}
\label{ex_mfg}
Consider $A=
\begin{bmatrix}
5 & -5\\
0 & 10
\end{bmatrix},\
B=
\begin{bmatrix}
1\\1
\end{bmatrix},\
Q=
\begin{bmatrix}
1&0\\
0&1
\end{bmatrix},\
\eta=
\begin{bmatrix}
1\\0
\end{bmatrix}, \
$
$
\Gamma=\begin{bmatrix}
5 & 0\\
2.5 & 5
\end{bmatrix}$, $\rho=2$, $R=1
$, and the initial condition is $x_0=\begin{bmatrix}
1\\1
\end{bmatrix}.
$
By \eqref{mfgM},
we calculate $$M_{\rm mfg}=
\begin{bmatrix}
   14.7999 &-42.0915 & -1.0000 & -1.0000\\
   10.7999 &-28.0915 & -1.0000 & -1.0000\\
    5.0000 &       0 &-14.7999 &-10.7999\\
    2.5000 &  5.0000 & 42.0915 & 28.0915
\end{bmatrix},
$$
which has eigenvalues $9.2522$, $1.7783$, $-2.0950$ and $-8.9356$. The Schur decomposition represents $M_{\rm mfg}$ as
\begin{align*}
U\begin{bmatrix}
   -8.9356 &-13.4806 & 27.2557 & -35.7593\\
         0 & -2.0950 &-46.5898 &-32.5048\\
         0 &       0 &  9.2522 &-15.8{}037\\
         0 &       0 &       0 &  1.7783
\end{bmatrix}U^{-1},
\end{align*}
where
\begin{align*}U=
\begin{bmatrix}
   -0.5425 & -0.6081 & -0.0928 &  0.5721\\
   -0.3060 & -0.4284 & -0.3027 & -0.7945\\
    0.5543 & -0.1947 & -0.7863 &  0.1911\\
   -0.5521 &  0.6394 & -0.5305 &  0.0700
\end{bmatrix}.
\end{align*}
For $U_{11}=\begin{bmatrix}
   -0.5425 & -0.6081\\
   -0.3060 & -0.4284
\end{bmatrix}
$,
 $\det(U_{11})=0.0464>0$, so $U_{11}$ is invertible. We select the initial condition
\begin{align*}
s_0&=U_{21}U_{11}^{-1} x_0+(U_{21}U_{11}^{-1}U_{12}-U_{22})\int_{0}^{\infty}e^{-M_{22}\tau}V_{22}Q\eta e^{-\frac{\rho\tau}{2}} d\tau\\
&=\begin{bmatrix}
2.31075 \\
-4.11538
\end{bmatrix}.
\end{align*}
Fig. \ref{ex3_1} shows the solution  $(\bar x,s)$ for \eqref{MFG_H1}-\eqref{MFG_H2}.

\begin{figure}[t]
\begin{center}
\begin{tabular}{c}
\psfig{file=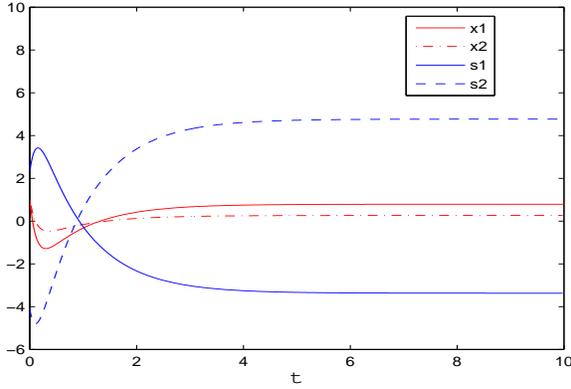, width=3.6in, height=2.2in}\\
\end{tabular}
\end{center}
\caption{ Solution of $(\bar x, s)$ in Example \ref{ex_mfg}} \label{ex3_1}
\end{figure}
\end{example}

\section{Conclusion}
\label{chp:con}
This paper develops a methodology to prove the existence and uniqueness of the solution of the LQ social optimization problem when the corresponding Hamiltonian matrix has no eigenvalues on the imaginary axis. We also develop a numerical method  for solving the ODE system by  applying an invariant subspace method.
 We further extend the invariant subspace method to solve LQ mean field games.

\end{document}